\begin{document}
\title{the associated map of the nonabelian Gauss-Manin connection}
\author{Ting Chen}
\maketitle

\begin{abstract}
The Gauss-Manin connection for nonabelian cohomology spaces is the isomonodromy flow. We write down explicitly the vector fields of the isomonodromy flow and calculate its induced vector fields on the associated graded space of the nonabelian Hogde filtration. The result turns out to be intimately related to the quadratic part of the Hitchin map.
\end{abstract}

\section{introduction}

The variation of Hodge structures for families of complex K\"ahler manifolds has been a much studied subject. Let $\pi :\mathscr{X}\to S$ be a proper holomorphic submersion of connected complex manifolds. Ehresmann's Lemma says that it is a locally trivial fiber bundle with respect to its underlying differentiable structure. In particular all the fibers of $\pi$ are diffeomorphic. So if $s\in S$ and $X_s$ the fiber of $\pi$ over $s$, $\mathscr{X}\to S$ can be viewed as a variation over $S$ of complex structures on the underlying differentiable manifold of $X_s$. 

Let $\pi ^k:\mathscr{V}\to S$ be the corresponding vector bundle of cohomologies whose fiber at $s\in S$ is $H^k(X_s,\mathbb{C})$, $k\in\mathbb{N}$. Since $\mathscr{X}\to S$ is locally trivial differentiably (and therefore topologically), there is an induced local identification of fibers of $\mathscr{V}\to S$. In another word, there is a flat connection on the vector bundle $\mathscr{V}\to S$. This connection is called the Gauss-Manin connection for the cohomologies of the family of complex K\"ahler manifolds $\mathscr{X}\to S$.

From Hodge theory we know there is a natural Hodge filtration on the vector bundle $\mathscr{V}\to S$: $\mathscr{V}=F^0\supset F^1\supset F^2\ldots\supset F^k$. Let $\nabla$ be the Gauss-Manin connection, Griffiths transversality theorem says that $\nabla (F^p)\subset F^{p-1}\otimes\Omega^1_S$, $1\leq p\leq k$. So if $gr\mathscr{V}$ is the associated graded vector bundle of the filtered bundle $\mathscr{V}$, then the induced map $gr\nabla$ of $\nabla$ on $gr\mathscr{V}$ will be $\mathscr{O}_S$-linear. In fact, $gr\nabla$ is equal to a certain Kodaira-Spencer map\cite{Gr}. We call $gr\nabla$ the \emph{associated map of the Gauss-Manin connection}.
 

The above has a nonabelian analogue. Let $G$ be a complex algebraic group, $X$ a smooth algebraic curve over $\mathbb{C}$ of genus $g$. Let $Conn_X$ be the moduli space of principal $G$-bundles over $X$ equipped with a flat connection. If we denote as $H^1(X,G)$ the first \v Cech cohomology of $X$ with coefficients the constant sheaf in $G$, then $Conn_X$ can be naturally identified with $H^1(X,G)$, by considering the gluing data of flat $G$-bundles. Since the group $G$ can be nonabelian, we call $Conn_X$ the nonabelian cohomology space of $X$.

Let $\mathscr{M}_g$ be the moduli space of genus $g$ complex algebraic curves. The universal curve $\mathscr{X}\to\mathscr{M}_g$ is (roughly) a variation of complex structures of the underlying real surface, and the universal moduli space of connections $\mathscr{C}onn\to\mathscr{M}_g$ is the corresponding bundle of nonabelian cohomologies. For the same reason as before there is a Gauss-Manin connection on the bundle $\mathscr{C}onn\to\mathscr{M}_g$. The local trivialization that defines it is often called the isomonodromy deformation, or the isomonodromy flow of $\mathscr{C}onn$ over  $\mathscr{M}_g$.

There is also a nonabelian analogue of Hodge filtration which was determined by Carlos Simpson\cite{Si}, using a generalized definition of filtration of spaces. A vector space with filtration is equivalent, by the Rees construction, to a locally free sheaf over $\mathbb{C}$ with a $\mathbb{C}^\ast$-action, and with the fiber over $1$ being the vector space itself. To define the nonabelian Hodge ``filtration'' on the space $\mathscr{C}onn$ therefore, it would be reasonable to find a family of spaces over $\mathbb{C}$ whose fiber over $1$ is $\mathscr{C}onn$, together with a $\mathbb{C}^\ast$-action on the family. The way to do it in this case is to introduce the notion of $\lambda$-connections on a principal $G$-bundle on $X$, for any $\lambda\in\mathbb{C}$. It is a generalization of the notion of connections on a $G$-bundle. In particular a $1$-connection is an ordinary connection, and a $0$-connection is a so called \emph{Higgs field}, which is an object of much interest to people in complex geometry and high energy physics. The moduli space of principal $G$-bundles over $X$ together with a Higgs field is called the Higgs moduli space over $X$, and denoted as $Higgs_X$. Simpson's definition of nonabelian Hodge filtration immediately implies that the associated graded space of $Conn_X$ is $Higgs_X$. Then a question arises: what is the associated map of the nonabelian Gauss-Manin connection on the associated graded space? The answer is: it is a lifting\footnote{Here the word lifting has a slightly more general meaning: it means a map of tangent vectors in the opposite direction of the pushforward, without requiring its composition with pushforward being identity. In fact, this lifting here composed with pushforward is zero.} of tangent vectors on the relative Higgs moduli space $\mathscr{H}iggs\to\mathscr{M}_g$. On the other hand there is a well-known Hitchin map from $Higgs_X$ to some vector spaces. The quadratic part of the Hitchin maps also induce a lifting of tangent vectors on $\mathscr{H}iggs\to\mathscr{M}_g$. The fact that the two liftings agree is the content of our theorem.

\newtheorem{thm0}{Theorem}[section]
\begin{thm0}
\label{thm0:main0}
The lifting of tangent vectors on $\mathscr{H}iggs\to\mathscr{M}_g$ representing the associated map of the nonabelian Gauss-Manin connection is equal up to a constant multiple to the lifting of tangent vectors induced from the quadratic Hitchin map.
\end{thm0}

Closely related results have been obtained in \cite{BzF}, where the authors apply localization for vertex algebras to the Segal-Sugawara construction of an internal action of the Virasoro algebra on affine Kac-Moody algebras to lift twisted differential operators from the moduli of curves to the moduli of curves with bundles. Their construction gives a uniform approach to several phenomena describing the geometry of the moduli spaces of bundles over varying curves, including a Hamiltonian description of the isomonodromy equations in terms of the quadratic part of Hitchin's system. Our result and proof are much more elementary, avoiding the need for the vertex algebra machinery.

The organization of the paper is as follows. In section 2 we give a detailed definition of all the objects concerned and a precise statement of the theorem. The rest of the sections are devoted to the proof. In section 3 we recall the definition of Atiyah bundles and some of its properties that will be useful in the proof. In section 4 we use deformation theory to write the tangent spaces to $\mathscr{C}onn$ as certain hypercohomology spaces. Section 5 gives an explicit description of the lifting of tangent vector on $\mathscr{C}onn\to\mathscr{M}_g$ given by the isomonodromy flow. Section 6 extend the isomonodromy lifting to the moduli space of $\lambda$-connections for any $\lambda\neq 0$. Finally section 7 takes the limit of the lifting at $\lambda =0$, which is precisely the associated map of the nonabelian Gauss-Manin connection, and shows that it is equal up to a constant to the quadratic Hitchin lifting of tangent vectors. 

I would like to thank my advisor Ron Donagi for introducing me to the subject and for many invaluable discussions.


\section{definitions and statement of the theorem}

All objects and morphisms in this paper will be algebraic over $\mathbb{C}$, unless otherwise mentioned. 

\subsection{Moduli space of connections and isomonodromy flow}

Let $g$ be a natural number greater or equal to 2, so that a generic curve of genus $g$ has no automorphisms. The moduli space of all genus $g$ curves is a smooth Deligne-Mumford stack, but if we restrict to the curves that has no automorphisms, the moduli space is actually a smooth scheme. Let $\mathscr{M}_g$ be this scheme. In this paper we will ignore all the special loci of the moduli spaces (as explained below) and focus on local behaviors around generic points.

Let $G$ be a semisimple Lie group, $X$ a smooth curve of genus $g$. Let $Bun_X$ be the coarse moduli space of regular stable $G$-bundles on $X$. $Bun_X$ is also a smooth scheme\cite{Mu}. The total space of the cotangent bundle $T^\ast Bun_X$ is an open subscheme of the Higgs moduli space over $X$ \cite{Hi}. However since we are only concerned with generic situations, we will use $Higgs_X$ to denote the open subscheme $T^\ast Bun_X$.

Let $Conn_X$ be the moduli space of pairs $(P,\nabla)$, where $P$ is a stable $G$-bundle on $X$, and $\nabla$ is a connection on $P$. $\nabla$ is necessarily flat as the dimension of $X$ is equal to 1. $Conn_X$ is an affine bundle on $Bun_X$ whose fiber over $P\in Bun_X$ is a torsor for $T_P^\ast Bun_X$. So it is also a smooth scheme.

Let $\mathscr{C}onn\to \mathscr{M}_g$ be the relative moduli space of pairs whose fiber at $X\in\mathscr{M}_g$ is $Conn_X$. Let $Irrep_X$ be the space of all irreducible group homomorphisms $\pi _1(X)\to G$, $Irrep_X$ is a smooth scheme\cite{IIS}. There is also the relative space $\mathscr{I}rrep\to \mathscr{M}_g$. The Riemann-Hilbert correspondence $RH: Conn_X\to Irrep_X$ taking a flat connection to its monodromy is an analytic(and therefore differentiable) inclusion. Let $S\subset\mathscr{M}_g$ be a small neighborhood of $X$ in analytic topology. By Ehresmann's Lemma the family of curves over $S$ is a trivial family with respect to the differentiable structure. This implies that the restriction of $\mathscr{I}rrep$ over $S$ is a differentiable trivial family. The trivial sections or trivial flows induce a flow on the restriction of $\mathscr{C}onn$ over $S$, by the Riemann-Hilbert correspondence. This flow is called the isomonodromy flow of $\mathscr{C}onn$ over $\mathscr{M}_g$.

\subsection{$\lambda$-connections and nonabelian Hodge filtration}

As explained in the last section, $Conn_X$ is the nonabelian cohomology space of $X$ with in coefficient $G$, and the isomonodromy flow on $\mathscr{C}onn\to \mathscr{M}_g$ is the nonabelian Gauss-Manin connection. To define Hodge filtration on $Conn_X$ one need to generalize the definition of a filtration. A filtration on a vector space $V$ is equivalent, by the Rees construction\cite{Hi}, to a locally free sheaf $W$ on $\mathbb{C}$ whose fiber at $1\in\mathbb{C}$ is isomorphic to $V$, together with a $\mathbb{C}^\ast$-action on $W$ compatible with the usual $\mathbb{C}^\ast$-action on $\mathbb{C}$. The fiber of $W$ at $0\in\mathbb{C}$ will be isomorphic to the associated graded vector space of $V$.

This sheaf definition of filtrations can be generalized in an obvious way to define filtrations on a space that is not a vector space. In our case the space is $Conn_X$, and its Hodge filtration is constructed as follows. $Conn_X$ parametrizes pairs ($P$,$\nabla$). Let $P$ also denote the sheaf of sections of $P$ on $X$, $adP$ be the adjoint bundle of $P$ as well as the sheaf of its sections, and $\mathfrak{g}$ the Lie algebra of $G$. A connection $\nabla$ is a map of sheaves
\[
\nabla :P \to adP\otimes\Omega _X^1
\]
that after choosing local coordinates for $X$ and local trivialization for $P$ can be written as 
\[
(\frac{\partial}{\partial x} +[A(x),\ ])\otimes dx
\]
where $A(x)$ is a $\mathfrak{g}$-valued function and the bracket means the right multiplication action of $G$ on $\mathfrak{g}$. A $\lambda$-connection on $P$ is defined to be a map of sheaves $\nabla_\lambda :P \to adP\otimes\Omega _X^1$ that in local coordinates can be written as $(\lambda\frac{\partial}{\partial x} +[A(x),\ ])\otimes dx$. Let the moduli space of $\lambda$-connections be denoted as $\lambda Conn_X$. For $\lambda\neq 0$, $\nabla\leftrightarrow\lambda\cdot\nabla$ is a bijection between $Conn_X$ and $\lambda Conn_X$. For $\lambda =0$, the definition of a $0$-connection agrees with that of a Higgs field. So $0Conn_X$ is just $Higgs_X$. 

Let $\mathcal{T}_X$ be the moduli space of all $\lambda$-connections for all $\lambda\in\mathbb{C}$. There is a natural map $\mathcal{T}_X\to\mathbb{C}$ taking a $\lambda$-connection to $\lambda$, whose preimage at $1\in\mathbb{C}$ is $Conn_X$. In fact, Simpson showed that the nonabelian Hodge filtration of $Conn_X$ is precisely the sheaf of sections of this map, with the $\mathbb{C}^\ast$-action given by multiplication by $\lambda$ for $\lambda\in\mathbb{C}^\ast$\cite{Hi}. The $\mathbb{C}^\ast$-action is algebraic and induces an isomorphism of $Conn_X$ and $\lambda Conn_X$.

In the ordinary Hodge theory, if one uses the sheaf definition of filtrations, then the associated map of the Gauss-Manin connection is obtained as follows. Start with the Gauss-Manin connection on $\mathscr{V}\to S$, the local trivialization by the flat sections gives a lifting of tangent vectors
\[
L: T_s S \to T_v \mathscr{V}
\]
for $s\in S$ and $v\in\mathscr{V}$ s.t. $\pi ^k(v)=s$. The lifting $L$ is a spliting of $\pi ^k_\ast$, i.e. it satisfies
\[
\pi ^k_\ast \circ L = id_{T_s S}
\]
Let $\mathscr{W}\to\mathbb{C}$ be the sheaf associated to the Hodge filtration on $\mathscr{V}\to S$. The fiber of $\mathscr{W}$ at $1$ is $\mathscr{V}\to S$, and denote the fiber over $\lambda$ as $\pi ^k_\lambda :\mathscr{V}_\lambda\to S$. The action of $\lambda\in\mathbb{C}^\ast$ induces an isomorphism of $\mathscr{V}$ and $\mathscr{V}_\lambda$. So the local trivialization of $\mathscr{V}\to S$ induces a local trialization of $\mathscr{V}_\lambda\to S$ via this isomorphism. Let $L_\lambda: T_s S \to T_{v_\lambda} \mathscr{V}_\lambda$ be the induced lifting on $\mathscr{V}_\lambda\to S$ \emph{multiplied by $\lambda$}. $L_\lambda$ satisfies
\[
\pi ^k_{\lambda\ast} \circ L_\lambda =\lambda\cdot id_{T_s S}
\]
$L_\lambda$ is defined for all $\lambda\neq 0$. For a fixed vector $\vec{t}\in T_s S$, the images of $\vec{t}$ under all the $L_\lambda$, $\lambda\neq 0$ gives a vector field on the total space of $\mathscr{W}$ away from $\mathscr{V}_0$, which is the fiber over $0\in\mathbb{C}$. The continuous limit of that vector field on $\mathscr{V}_0$ exist, and therefore defines a lifting $L_0:T_s S\to T_{v_0}\mathscr{V}_0$ on $\mathscr{V}_0\to S$. $L_0$ satisfies 
\[
\pi ^k_{0\ast} \circ L_0 = 0
\]
i.e. the images of $\vec{t}\in T_s S$ under $L_0$ is a vectors field on the \emph{fiber} $V_{0,s}$ of $\mathscr{V}_0$ over $s$. This vector field is in fact linear and defines a linear map on $V_{0,s}$. Also $\mathscr{V}_0$ is identified with $gr\mathscr{V}$. From these we see $L_0$ really gives a vector bundle map $gr\mathscr{V}\to gr\mathscr{V}\otimes\Omega^1_S$, and that map is the associated map of the Gauss-Manin that we started with.

So in nonabelian Hodge theory, in order to calculate the associated map of the nonabelian Gauss-Manin connection, we will start with the lifting $L$ induced from the isomonodromy flow on $\mathscr{C}onn\to \mathscr{M}_g$(by a slight abuse of notation we will use the same notations for the liftings, the meaning should be clear from the context), and try to find the associated limit lifting $L_0$. Specifically, let $\mathscr{T}\to\mathscr{M}_g$ be the relative moduli space whose fiber at $X\in\mathscr{M}_g$ is $\mathcal{T}_X$. $\mathscr{T}$ maps to $\mathbb{C}$ and the fiber at $\lambda$ is the relative moduli space of $\lambda$-connections, which is denoted $\lambda\mathscr{C}onn$. There is clearly also a $\mathbb{C}^\ast$-action on $\mathscr{T}$ compatible with the $\mathbb{C}^\ast$-action on $\mathbb{C}$. Let $L_\lambda$ be analogously the lifting on $\lambda\mathscr{C}onn\to \mathscr{M}_g$ induced by the lifting $L$ via the $\mathbb{C}^\ast$-action and multiplied by $\lambda$. Then the limit lifting $L_0$ will be the associated map that we want to calculate. It will again be a vertical lifting, i.e. the images of $L_0$ will be vectors
tangent to the fibers $Higgs_X$ of $\mathscr{H}iggs\to\mathscr{M}_g$, $X\in\mathscr{M}_g$.  

\subsection{Quadratic Hitchin map and statement of the theorem}

$Higgs_X$ has a symplectic structure as it is equal to $T^\ast Bun_X$. Let $<\ ,\ >$ be the Killing form on the Lie algebra $\mathfrak{g}$ of $G$, the quadratic Hitchin map is 
\[
qh: Higgs_X \to H^0(X,\Omega ^{\otimes 2})
\]
\[
(P,\theta) \mapsto <\theta,\theta>
\]
where $\theta\in H^0(X,adP\otimes\Omega^1_X)$ is a 0-connection or a Higgs field. One can define a lifting of tangent vectors associated to $qh$
\[
L_{qh}: T_X\mathscr{M}_g \to T_{(P,\theta )}Higgs_X
\]
\[
f \mapsto H_{qh^\ast f}|_{(P,\theta )}
\] 
where $f\in T_X\mathscr{M}_g \cong H^1(X,TX)$ is viewed as a linear function on $H^0(X,\Omega ^{\otimes 2})$ by Serre duality, and $H_{qh^\ast f}$ is the Hamiltonian vector field of $qh^\ast f$ on $Higgs_X$.

The theorem can now be more precisely stated as:
\newtheorem{thm1}{Theorem}[section]
\begin{thm1}[precise version of Theorem~\ref{thm0:main0}]
\label{thm1:main1}
The limit lifting of tangent vectors $L_0$ associated to the isomonodromy lifting $L$ is equal to $\frac{1}{2}L_{qh}$.
\end{thm1}


\section{Atiyah bundles}

Before starting to prove the theorem, we recall here some facts about Atiyah bundles which will be used later. As before let $X$ be a smooth curve of genus $g$, $G$ a semisimple Lie group, $p:P\to X$ a principal $G$-bundle over $X$.

\subsection{Atiyah bundle and its sections}
\label{sec:abs}
Let $TP$ be the tangent bundle over $P$. $G$ acts on $P$ and has an induced action on $TP$. The action is free and compatible with the vector bundle structure of $TP\to P$, so the quotient will be a vector bundle $TP/G \to P/G=X$. This vector bundle  over $X$ is called the Atiyah bundle associated to $P$, and denoted as $A_P$.

In fact, $TP$ is isomorphic to the fiber product of $P$ and $A_P$ over $X$. So any section $t$ of $A_P$ over $X$ has a unique lift $\tilde{t}$ that makes the diagram commute
\newarrow{Dashto} {}{dash}{}{dash}>
\begin{diagram}
TP & \rTo^{/G} & A_P \\
\dTo\uDashto_{\tilde{t}} & & \dTo\uDashto_{t} \\
P & \rTo^{/G} & X \\
\end{diagram}

The lift $\tilde{t}$ can be viewed as a vector field on $P$ which is $G$-invariant. Conversely, any $G$-invariant vector field on $P$ defines a section $t$ in the quotient bundle. Therefore sections of $A_P$ over $X$ are the same as $G$-invariant vector fields on $P$.

\subsection{Atiyah sequence}
The sequence of tangent bundles associated to $P\to X$ is:
\[
0 \to T_{P/X}\to TP \to p^\ast TX \to 0
\]
$G$ acts on the sequence, and the quotient is
\[
0 \to adP \to A_P \to TX \to 0
\]
This quotient sequence is called the Atiyah sequence of $A_P$. We will denote the map $A_P \to TX$ also as $p_\ast$.

\subsection{Relation to connections}
\label{sec:rtc}
If $\nabla$ is a connection on $P$, then $\nabla$ must be flat since the dimension of $X$ is 1. So over a small open subset $U\subset X$, there is a natural trivialization of $P$ associated to $\nabla$ 
\[
\tau : U \times F \longrightarrow P|_U
\]
given by the flat sections of $\nabla$. Here F denotes a torsor for $G$.

The local trivialization gives a local section $\tilde{s}_U: p^\ast TU \to TP|_U$, which is the composition
\begin{equation}
p^\ast TU \xrightarrow{\tau ^{-1} _\ast} p_U^\ast TU \xrightarrow{(id,0)} p_U^\ast TU \oplus p_F^\ast TF \xrightarrow{\tau _\ast} TP|_U
\end{equation}
where $p_U$ and $p_F$ are the projections of $U\times F$ to $U$ and $F$.

Since $\tilde{s}_U$ is canonically associated to $\nabla$, so for two such open subsets $U,V$, $\tilde{s} _U$ and $\tilde{s} _V$ agree on their intersection. So there is a well-defined map $\tilde{s}: p^{\ast}TU \to TP$. Since $\tau$ is $G$-invariant and the map $(id,0)$ is obviously $G$-invariant, $\tilde{s}_U$ is $G$-invariant. So $\tilde{s}$ is $G$-invariant, and gives a map $s: TX \to A_P$. The map $(id,0)$ in the definition of $\tilde{s}_U$ implies that $s$ is a splitting of $p_\ast :A_P \to TX$, i.e. $p_\ast\circ s = id_{TX}$. We can also say that $s$ is a splitting of the Atiyah sequence.
\begin{diagram}[size=2em]
0 & \rTo & adP & \rTo & A_P & \pile{\rTo^{p_\ast} \\ \lDashto_s} & TX & \rTo & 0
\end{diagram}

To summarize, for any connection $\nabla$ on $P$ there is uniquely associated a splitting $s$ of the Atiyah sequence of $P$. $s$ is locally defined as the splitting $(id,0)$ with $P$(and therefore $A_P$) locally trivialized by $\nabla$.


\section{tangent spaces}

Now we start to prove the theorem. In this section we will identify the tangent spaces of $\mathscr{C}onn$ and more generally $\lambda\mathscr{C}onn$ as some hypercohomology spaces, so that we may write down the isomonodromy lifting $L$ and the extened liftings $L_\lambda$ explicitly in the next two sections.

The tangent space to a moduli space at a regular point is identified with the infinitesimal deformations of the object corresponding to that point. So we are really looking at infinitesimal deformations of the objects parametrized by $\mathscr{C}onn$, which are triples ($X$,$P$,$\nabla$). We start with deformations of pairs ($X$,$P$).

\subsection{Deformation of pairs}
\label{sec:dp}
From Deformation Theory, the following two propositions are well-known.

\newtheorem{tc}{Proposition}[subsection]
\begin{tc}
\label{tc}
The tangent space to $\mathscr{M}_g$ at a point $X$ is naturally isomorphic to $H^1(X,TX)$.
\end{tc}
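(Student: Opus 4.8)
The statement to prove is the standard Kodaira–Spencer identification of the tangent space to $\mathscr{M}_g$ at a curve $X$ with $H^1(X,TX)$. Here is how I would proceed.

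\medskip

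\emph{Plan.} The idea is to compute the first-order deformations of the complex structure on $X$ directly via \v{C}ech cocycles, and to identify the resulting space of deformations modulo trivial ones with $H^1(X,TX)$. First I would fix a finite affine (or Stein, in the analytic setting) open cover $\{U_i\}$ of $X$ by coordinate charts, with coordinate $z_i$ on $U_i$. A first-order deformation of $X$ is a scheme $\mathscr{X}$ flat over $\operatorname{Spec}\mathbb{C}[\varepsilon]/(\varepsilon^2)$ whose special fiber is $X$; since each $U_i$ is affine and hence has no higher obstructions, the deformation is trivial on each $U_i$, so we may choose isomorphisms $\phi_i$ of $U_i\times\operatorname{Spec}\mathbb{C}[\varepsilon]/(\varepsilon^2)$ with the restriction of $\mathscr{X}$. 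On overlaps $U_i\cap U_j$ the comparison $\phi_i^{-1}\circ\phi_j$ is an automorphism of $(U_i\cap U_j)\times\operatorname{Spec}\mathbb{C}[\varepsilon]/(\varepsilon^2)$ reducing to the identity mod $\varepsilon$, hence of the form $z_i\mapsto z_i+\varepsilon\,\theta_{ij}$ for a regular function $\theta_{ij}$ on $U_i\cap U_j$, i.e.\ a section of $TX$ over $U_i\cap U_j$. The cocycle condition for the $\phi_i$ to glue forces $\theta_{ij}+\theta_{jk}=\theta_{ik}$, so $(\theta_{ij})$ is a \v{C}ech $1$-cocycle with values in $TX$.

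\medskip

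\emph{Key steps.} (1) Show the class of $(\theta_{ij})$ in $H^1(X,TX)$ is independent of the choice of trivializations $\phi_i$: a different choice changes $\phi_i$ by an automorphism $z_i\mapsto z_i+\varepsilon\,\eta_i$ with $\eta_i\in\Gamma(U_i,TX)$, which alters $\theta_{ij}$ by the coboundary $\eta_j-\eta_i$. (2) Show the map (deformation) $\mapsto$ (class in $H^1(X,TX)$) is well-defined on isomorphism classes of deformations: an isomorphism of deformations fixing $X$ is exactly such a collection of $\eta_i$ glued compatibly, again changing the cocycle by a coboundary. (3) Injectivity: if the class vanishes, then $\theta_{ij}=\eta_j-\eta_i$, and absorbing $\eta_i$ into $\phi_i$ makes all transition automorphisms trivial, so $\mathscr{X}$ is the trivial deformation. (4) Surjectivity: given any cocycle $(\theta_{ij})$, use the automorphisms $z_i\mapsto z_i+\varepsilon\,\theta_{ij}$ as gluing data to build $\mathscr{X}$; the cocycle condition is exactly what is needed for the gluing to be consistent, and flatness over $\mathbb{C}[\varepsilon]/(\varepsilon^2)$ is automatic since $\mathbb{C}[\varepsilon]/(\varepsilon^2)$ is free as a module over itself. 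Finally, one checks the bijection is $\mathbb{C}$-linear, which is clear from the construction. Since $g\ge 2$ and we work at a generic curve with no automorphisms, the moduli space is a scheme near $X$ and its tangent space is exactly this space of first-order deformations, giving the natural isomorphism $T_X\mathscr{M}_g\cong H^1(X,TX)$.

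\medskip

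\emph{Main obstacle.} None of the individual steps is deep; the proof is essentially a formal unwinding of \v{C}ech cohomology. The one point requiring genuine care is the well-definedness and naturality of the identification — making sure that ``isomorphism of deformations'' translates precisely into ``change by a \v{C}ech coboundary,'' and that no choices (of cover, of coordinates, of trivializations) affect the resulting cohomology class. In practice this is handled cleanly by phrasing everything in terms of the sheaf of infinitesimal automorphisms and invoking that $H^1$ classifies gluings; alternatively one cites the general Kodaira–Spencer theory. Given the paper's stated intention to work locally around generic points and to keep arguments elementary, I would present the \v{C}ech-cocycle construction explicitly and remark that it is the classical Kodaira–Spencer isomorphism.
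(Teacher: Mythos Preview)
Your argument is correct and is precisely the classical Kodaira--Spencer/\v{C}ech construction. However, the paper does not actually prove this proposition: it is introduced with the phrase ``From Deformation Theory, the following two propositions are well-known'' and is left without proof. So there is nothing to compare against at the level of the statement itself.

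That said, your approach is exactly the template the paper adopts when it \emph{does} give a proof, namely for the deformation of pairs $(X,P)$ in Proposition~\ref{dp}: choose local trivializations $\phi_i$ (resp.\ $\iota_i$) over a \v{C}ech cover, write the transition automorphisms as $Id+\epsilon\,\eta_{ij}$ (resp.\ $Id+\epsilon\,\xi_{ij}$), check closedness from the cocycle condition, and check that changing the trivializations alters the cocycle by a coboundary. Your $(\theta_{ij})$ is the paper's $(\xi_{ij})$. So while the paper treats the curve case as known background, your write-up is fully in line with the paper's methodology and would slot in as the omitted ``usual proof'' that Proposition~\ref{dp} explicitly says it is combining.
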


\newtheorem{tb}[tc]{Proposition}
\begin{tb}
\label{tb}
The tangent space to $Bun_X$ at a point $P$ is naturally isomorphic to $H^1(X,adP)$.
\end{tb}

Let $\mathscr{B}un$ be the moduli space of pairs ($X$,$P$). We expect that generically the tangent space at a point ($X$,$P$) would satisfy
\[
0 \to H^1(X,adP) \to T_{(X,P)}\mathscr{B}un \to H^1(X,TX) \to 0
\]
On the other hand since the Atiyah sequence of $P$ $0 \to adP \to A_P \to TX \to 0$ induces
\[
0 \to H^1(X,adP) \to H^1(X,A_P) \to H^1(X,TX) \to 0
\]
It is natural to guess that

\newtheorem{prop3}[tc]{Proposition}
\begin{prop3}
\label{dp}
$T_{(X,P)}\mathscr{B}un$ is naturally isomorphic to $H^1(X,A_P)$.
\end{prop3}
\begin{proof} the proof is a combination of the usual proofs for Proposition~\ref{tc} and Proposition~\ref{tb}. Let $\{ U_i \}_{i\in I}$ be an \v Cech covering of $X$, $P_\epsilon \to X_\epsilon$ a family of principal $G$-bundles over $D_\epsilon =\mathbb{C}[\epsilon]/(\epsilon ^2)$, which restrict to $P \to X$ over the closed point. Over each $U_i$, let 
\[
\phi _i: P|_{U_i}\times D_\epsilon \to P_\epsilon|_{U_i}\ \ \ \ \ \         
(\phi _i^\vee : \mathscr{O}_{P|_{U_i}} \otimes \mathbb{C}[\epsilon]/(\epsilon ^2) \gets \mathscr{O}_{P_\epsilon|_{U_i}})
\]
be an isomorphism of $G$-bundles. So it is compatible with the $G$-actions and descends to an isomorphism
\[
\iota _i: U_i \times D_\epsilon \to X_\epsilon |_{U_i}\ \ \ \ \ \          
(\iota _i^\vee : \mathscr{O}_{U_i} \otimes \mathbb{C}[\epsilon]/(\epsilon ^2) \gets \mathscr{O}_{X_\epsilon|_{U_i}})
\]
Over $U_{ij}=U_i \cap U_j$, the transition functions are related as in the commutative diagram
\begin{diagram}
P|_{U_{ij}}\times D_\epsilon & \rTo^{\phi _j ^{-1} \circ \phi _i} & P|_{U_{ij}}\times D_\epsilon \\
\dTo_p & & \dTo_p \\
U_{ij} \times D_\epsilon & \rTo^{\iota _j ^{-1} \circ \iota _i} & U_{ij} \times D_\epsilon \\
\end{diagram}

Let $\xi _{ij} \in \Gamma (U_{ij},TX)$ be the vector field on $U_{ij}$ such that $(\iota _j ^{-1} \circ \iota _i)^\vee =Id+\epsilon\xi _{ij}$, and $\eta _{ij} \in \Gamma (P|_{U_{ij}},TP)$ be the vector field on $P|_{U_{ij}}$ such that $(\phi _j ^{-1} \circ \phi _i)^\vee =Id+\epsilon\eta _{ij}$. Because $\phi _i$ is $G$-invariant, $\eta _{ij}$ is $G$-invariant. So one can view it as $\eta _{ij} \in \Gamma (U_{ij},A_P)$. $(\eta _{ij})_{i,j\in I}$ form a \v Cech 1-cochain on $X$ with coefficients in $A_P$.

$(\eta _{ij})_{i,j\in I}$ is closed because it comes from transition functions $\phi _j ^{-1} \circ \phi _i$. Any closed cochain $(\eta _{ij})_{i,j\in I}$ comes from some $D_\epsilon$ family of pairs. Also for a fixed $D_\epsilon$ family of pairs, a different choice of $\phi _i$'s will result in a cocycle differing from $(\eta _{ij})_{i,j\in I}$ by an exact cocycle. And any exact cocycle is the result of different choices of $\phi _i$'s. Therefore the infinitesimal deformations of ($X$,$P$) are in natural correspondence with $H^1(X,A_P)$, which proves the proposition. 
\end{proof}

\subsection{Deformation of triples}
\label{sec:dt}
Now we come to the infinitesimal deformations of a triple ($X$,$P$,$\nabla$). First a notation related to the connection $\nabla$. As discussed in section~\ref{sec:rtc}, a connection $\nabla$ on $P$ is equivalent to a splitting of the Atiyah sequence 
\begin{diagram}[size=2em]
0 & \rTo & adP & \rTo & A_P & \pile{\rTo^{p_\ast} \\ \lDashto_s} & TX & \rTo & 0
\end{diagram}
Let $\hat{s}\in H^0(X,A_P\otimes \Omega _X^1)$ denote the global section associated to the splitting map $s$. We see that $\hat{s}\mapsto 1$ under the map $H^0(X,A_P\otimes \Omega _X^1) \to H^0(X,TX\otimes \Omega _X^1)\cong H^0(X,\mathscr{O}_X)$.

To find the deformation of the triple ($X$,$P$,$\nabla$), let ($X_\epsilon$,$P_\epsilon$,$\nabla_\epsilon$) be a family of triples over $D_\epsilon$ starting with it. Let $s_\epsilon$ be the family of sections corresponding to $\nabla_\epsilon$. As in the proof of Proposition~\ref{dp}, let $\{ U_i \}_{i\in I}$ again be an \v Cech covering of $X$, and $\phi _i$, $\iota _i$, $i\in I$ defined in the same way. Let $s_i: TU_i \to A_P|_{U_i}$ and $\sigma _i: TU_i \to adP|_{U_i}$ be sections such that the following diagram commutate:
\begin{diagram}
A_P|_{U_i} \times D_\epsilon & \rTo^{d\phi _i} & A_{P_\epsilon}|_{U_i} \\
\uDashto^{s_i+\epsilon\sigma _i} \dTo_{p_\ast} &  & \dTo^{p_\ast} \uDashto_{s_\epsilon |_{U_i}} \\
TU_i \times D_\epsilon & \rTo^{d\iota _i} & TX_\epsilon |_{U_i} \\
\end{diagram}
The target space of $\sigma _i$ is $adP$ instead of $A_P$, because $p_\ast \circ s=id$ for all $s$, so $\sigma _i$, being the derivative of $s$ (locally on $U_i$, under the trivialization of the family $\phi _i$), projects to $0$ under $p_\ast$.

A deformation of the triple should contain the information about the deformation of the pair ($X$,$P$) as well as the deformation of $\nabla$. So the data associated to the infinitesimal family ($X_\epsilon$,$P_\epsilon$,$\nabla_\epsilon$) should be the pair:
\[
(\eta_{ij})_{i,j\in I},(\sigma _i)_{i\in I}
\]
where $(\eta_{ij})_{i,j\in I}$ is defined in section~\ref{dp} and shown to characterize the deformation of the pair ($X$,$P$), and $(\sigma _i)_{i\in I}$ describe the deformation of $\nabla$.

The data $((\eta_{ij})_{i,j\in I},(\sigma _i)_{i\in I})$ looks like a 1-cocycle in defining the hypercohomology of some complex of sheaves. Recall that the tangent space to $Higgs_X$ at a point $(P,\theta)$ is $\mathbb{H}^1(X,adP\xrightarrow{[\ ,\theta]}adP\otimes\Omega^1_X)$. We will prove an analogous result about the tangent spaces to $\mathscr{C}onn$. 

On $U_{ij}$, the transition relations are expressed in the following diagram:
\begin{diagram}
A_P|_{U_{ij}} \times D_\epsilon & \rTo^{d(\phi _j ^{-1} \circ \phi _i)} & A_P|_{U_{ij}} \times D_\epsilon \\
\uDashto^{s_i+\epsilon\sigma _i} \dTo_{p_\ast} &  & \dTo^{p_\ast} \uDashto_{s_j+\epsilon\sigma _j} \\
TU_{ij} \times D_\epsilon & \rTo^{d(\iota _j ^{-1} \circ \iota _i)} & TU_{ij} \times D_\epsilon \\
\end{diagram}

Since $(\iota _j ^{-1} \circ \iota _i)^\vee =Id+\epsilon\xi _{ij}$ and $(\phi _j ^{-1} \circ \phi _i)^\vee =Id+\epsilon\eta _{ij}$, we can write down the two horizontal maps more explicitly. $\forall\ Y+\epsilon Y_1 \in TU_{ij} \times D_\epsilon$, its image $Y'+\epsilon Y_1'$ under $d(\iota _j ^{-1} \circ \iota _i)$ is determined by: for any function $f$ on $U_{ij}$,
\[
(Y'+\epsilon Y_1')(f)=(I+\epsilon\xi _{ij})(Y+\epsilon Y_1)(I-\epsilon\xi _{ij})(f)
\]
After simplification we get $Y'=Y,Y_1'=Y_1+[\xi _{ij},Y]$, where the bracket is the Lie bracket of vector fields on $U_{ij}$. Similarly $\forall\ Z+\epsilon Z_1 \in A_P|_{U_{ij}} \times D_\epsilon$ (by section~\ref{sec:abs} it can be viewed as a $G$-invariant vector field on $P|_{U_{ij}}$), we get $d(\phi _j ^{-1} \circ \phi _i)(Z+\epsilon Z_1)=Z+\epsilon (Z_1+[\eta _{ij},Z])$, where the bracket is the Lie bracket of ($G$-invariant) vector fields on $P|_{U_{ij}}$.

The diagram is commutative, i.e. $\forall\ Y+\epsilon Y_1 \in TU_{ij} \times D_\epsilon$
\[
d(\phi _j ^{-1} \circ \phi _i)\circ (s_i+\epsilon\sigma _i) (Y+\epsilon Y_1) = (s_j+\epsilon\sigma _j) \circ d(\iota _j ^{-1} \circ \iota _i) (Y+\epsilon Y_1) 
\]
After simplification we get
\[
s_i(Y)=s_j(Y)
\]
\begin{equation}
\label{midzero}
(\sigma _j - \sigma _i)(Y) = [\eta _{ij},s_i(Y)]-s_j([\xi _{ij},Y])
\end{equation}

So if we use $\hat{\sigma}_i \in H^0(X,adP\otimes \Omega _X^1)$ to denote the global section associated to $\sigma _i$, the pair 
\[
((\eta_{ij})_{i,j\in I},(\hat{\sigma}_i)_{i\in I})
\]
is a hyper \v Cech 1-cochain on $X$ with coefficients in 
\[
A_P\xrightarrow{[\ ,\hat{s}]}adP\otimes\Omega^1_X
\]
where the map $[\ ,\hat{s}]$ is defined as: if $\hat{s}=s'\otimes \omega$, where $s'\in H^0(X,A_P), \omega \in H^0(X,\Omega _X^1)$, then $[\ ,\hat{s}]:=[\ ,s']\otimes\omega-s'\otimes [p_\ast(\ ),\omega]$.

\newtheorem{prop4}[tc]{Proposition}
\begin{prop4}
\label{dt}
$T_{(X,P,\nabla)}\mathscr{C}onn$ is naturally isomorphic to $\mathbb{H}^1(X,A_P\xrightarrow{[\ ,\hat{s}]}adP\otimes\Omega^1_X)$.
\end{prop4}
\begin{proof}
To any $D_\epsilon$ family of triples ($X_\epsilon$,$P_\epsilon$,$\nabla_\epsilon$) is associated a hyper 1-cochain $((\eta_{ij})_{i,j\in I},(\hat{\sigma}_i)_{i\in I})$ by the above discussion. It is closed because of three facts: first, $(\eta _{ij})_{i,j\in I}$ is a closed \v Cech 1-cochain with coefficients in $A_P$ - it's closed again because it comes from the transition function $\phi _j ^{-1} \circ \phi _i$; second, because of \eqref{midzero}; third, the complex $A_P\xrightarrow{[\ ,\hat{s}]}adP\otimes\Omega^1_X$ has only two nonzero terms. These three facts imply that $((\eta_{ij})_{i,j\in I},(\hat{\sigma}_i)_{i\in I})$ is closed. Any closed hyper 1-cochain comes from some $D_\epsilon$ family of triples. Also for a fixed $D_\epsilon$ family of triples, a different choice of the $\phi _i$'s will result in a hyper cocycle differing from $((\eta_{ij})_{i,j\in I},(\hat{\sigma}_i)_{i\in I})$ by an exact hyper cocycle. And any exact hyper cocycle is the result of different choices of the $\phi _i$'s. Therefore the infinitesimal deformations of ($X$,$P$,$\nabla$) are in natural correspondence with $\mathbb{H}^1(X,A_P\xrightarrow{[\ ,\hat{s}]}adP\otimes\Omega^1_X)$, which is what we need to prove. 
\end{proof}

\subsection{Tangent spaces to $\lambda\mathscr{C}onn$}
Let $\lambda\in\mathbb{C}$ be a fixed complex number. For the moduli space $\lambda\mathscr{C}onn$ of triples ($X$,$P$,$\nabla_\lambda$) where $\nabla_\lambda$ is a $\lambda$-connection, the statement about its tangent spaces is completely analogous to that when $\lambda =1$. 

For a $\lambda$-connection $\nabla_\lambda$ on $P$, $\lambda\neq 0$, $\frac{1}{\lambda}\nabla_\lambda$ is an ordinary connection, therefore corresponds to a splitting $s_{\frac{1}{\lambda}\nabla_\lambda}$ of the Atiyah sequence of $P$. Let $s_\lambda =\lambda\cdot s_{\frac{1}{\lambda}\nabla_\lambda}$, so $s_\lambda$ is a ``$\lambda$-splitting'' of the Atiyah sequence of $P$, i.e. $p_\ast\circ s_\lambda =\lambda\cdot id_{TX}$. Therefore to any $\lambda$-connection $\nabla_\lambda$($\lambda\neq 0$) is associated a $\lambda$-splitting of the Atiyah bundle. Notice that this is true for $\lambda =0$ as well, as a 0-splitting of the Atiyah bundle of $P$ is exactly a Higgs field on $P$.

Let $\hat{s}_\lambda\in H^0(X,A_P\otimes \Omega _X^1)$ be the global section associated to $s_\lambda$, we see $\hat{s}_\lambda\mapsto \lambda$ under the map $H^0(X,A_P\otimes \Omega _X^1) \to H^0(X,TX\otimes \Omega _X^1)\cong H^0(X,\mathscr{O}_X)$. The arguments in the last subsection can be repeated with slight changes (replace 1 by $\lambda$ at appropriate places) to give the following statement.

\newtheorem{prop5}[tc]{Proposition}
\begin{prop5}
$T_{(X,P,\nabla_\lambda)}\lambda\mathscr{C}onn$ is naturally isomorphic to $\mathbb{H}^1(X,A_P\xrightarrow{[\ ,\hat{s}_\lambda]}adP\otimes\Omega^1_X)$, $\forall \lambda\in\mathbb{C}$ 
\end{prop5}

\subsection*{Remark}
When $\lambda =0$, the result agrees with the previous results about tangent spaces to the Higgs moduli space.


\section{isomonodromy vector field}

The nonabelian Gauss-Manin connection on $\mathscr{C}onn \to \mathscr{M}_g$ is the isomonodromy flow. The local trivialization of $\mathscr{C}onn \to \mathscr{M}_g$ given by the flow induces a lifting of tangent vectors $L:T_X\mathscr{M}_g \to T_{(X,P,\nabla)}\mathscr{C}onn$. We have identified these tangent spaces as (hyper)cohomology spaces in the last section, now we will write down the map $L$ as a map of cohomology spaces. We start with a useful fact about an isomonodromy family of connections.

\subsection{Universal connection of an isomonodromy family}
\label{sec:gfc}
In \cite{IIS} Inaba et al. constructed the moduli space of triples ($X$,$P$,$\nabla$), and a universal $G$-bundle on the universal curve with a universal connection. Though they did it for a special case(rank 2 parabolic vector bundle on $\mathbb{P}^1$ with 4 points), the more general case can be done similarly. The universal connection, when restricted to an isomonodromy family of triples, has the following important property.

\newtheorem*{prop10}{Proposition}
\begin{prop10}
If ($X_t$,$P_t$,$\nabla _t$) is an isomonodromy family of triples over a complex line $D=Spec(\mathbb{C}[t])$, then the restriction of the universal connection on $P_t$(viewed as a $G$-bundle over the total space of $X_t$) is flat.
\end{prop10}
\begin{proof}
If we only look at the underlying differentiable structure, the isomonodromy family over $D=\mathbb{C}[t]$ is a trivial family of triples. The trivial family structure gives a flat connection on $P_t$, which must be equal to the restriction of the universal connection on $P_t$ since they are equal on each fiber of the family.
\end{proof}

\subsection{Isomonodromy lifting of tangent vectors}
For $\forall \lambda\in\mathbb{C}$, let $\pi_\lambda$ be the projection:
\[
\pi_\lambda: \lambda\mathscr{C}onn \to \mathscr{M}_g
\]
\[
(X,P,\nabla_\lambda) \mapsto X 
\]

From the proof of Proposition~\ref{dp} and the discussions in front of Proposition~\ref{dt} it is not hard to see that the differential of $\pi_\lambda$
\begin{diagram}
T_{(X,P,\nabla_\lambda )}\lambda\mathscr{C}onn & & & \cong & & & \mathbb{H}^1(X,A_P\xrightarrow{[\ ,\hat{s}_\lambda ]}adP\otimes\Omega^1_X) \\
\dTo^{\pi_{\lambda\ast}} \\
T_X\mathscr{M}_g & \cong & & H^1(X,TX) & & \cong & \mathbb{H}^1(X,TX\to 0) \\
\end{diagram}
is induced from the map $(p_\ast,0)$ of complexes of sheaves
\begin{diagram}
(A_P & \rTo^{[\ ,\hat{s}_\lambda]} & adP\otimes\Omega ^1_X) \\
\dTo_{p_\ast} & & \dTo_{0}\\
(TX & \rTo & 0) \\
\end{diagram}

The lifting of tangent vectors induced from the isomonodromy flow is a splitting of the map $\pi_{1\ast}$
\begin{diagram}
T_{(X,P,\nabla)}\mathscr{C}onn & & \cong & & \mathbb{H}^1(X,A_P\xrightarrow{[\ ,\hat{s}]}adP\otimes\Omega^1_X) \\
\dTo^{\pi_{1\ast}} \uDashto \\
T_X\mathscr{M}_g & & \cong & & \mathbb{H}^1(X,TX\to 0) \\
\end{diagram}
Notice that the splitting map $s: TX\to A_P$ associated to $\nabla$ gives a map of the complexes 
\begin{diagram}
(A_P & \rTo^{[\ ,\hat{s}]} & adP\otimes\Omega ^1_X) \\
\dTo^{p_\ast} \uDashto_{s} & & \dTo^{0} \uDashto_{0} \\
(TX & \rTo & 0) \\
\end{diagram}
The diagram is commutative because $[\ ,\hat{s}] \circ s$ is basically bracketing $\hat{s}$ with itself and therefore equal to 0. The map of complexes $(s,0)$ is obviously a splitting of the map $(p_\ast,0)$.

The map $(s,0)$ of the complexes of sheaves induce a map on the first hypercohomology, which we denote as $H^1(s)$.
\newtheorem{prop6}{Proposition}[subsection]
\begin{prop6}
\label{prop6}
The isomonodromy lifting $L$ is equal to 
\[
H^1(s): H^1(X,TX)\longrightarrow \mathbb{H}^1(X,A_P\xrightarrow{[\ ,\hat{s}]}adP\otimes\Omega ^1_X)
\]
\end{prop6}
\begin{proof}
At a point ($X$,$P$,$\nabla$) of $\mathscr{C}onn$, let ($X_\epsilon$,$P_\epsilon$,$\nabla _\epsilon$) be an isomonodromy family of triples over $D_\epsilon$ starting with it. Again let $\{ U_i \}_{i\in I}$ be an \v Cech covering of $X$.

Over $U_i$, Let
\[
\tau _{i,\epsilon}: X_\epsilon|_{U_i} \times F \to P_\epsilon|_{U_i}
\]
be the trivialization of $P_\epsilon|_{U_i}$ over $X_\epsilon|_{U_i}$ determined by the flat universal connection (see section~\ref{sec:gfc}) on $P_\epsilon|_{U_i}$, and $\tau _i$ be its restriction at $\epsilon =0$.

Let 
\[
\iota _i: U_i \times D_\epsilon \to X_\epsilon|_{U_i}
\]
be an isomorphism and define 
\[
\phi _i: P|_{U_i}\times D_\epsilon \to P_\epsilon|_{U_i}
\]
as the composition
\[
P|_{U_i} \times D_\epsilon \xrightarrow{(\tau _i^{-1},id_{D_\epsilon})} U_i \times D_\epsilon \times F \xrightarrow{(\iota _i,id_F)} X_\epsilon|_{U_i} \times F \xrightarrow{\tau _{i,\epsilon}} P_\epsilon|_{U_i}
\]

Let $\xi _{ij}$, $\eta _{ij}$, $s_\epsilon$, $s_i$ and $\sigma _i$ be all defined as before in the proofs of proposition~\ref{dp} and section~\ref{sec:dt}. 
Notice that since the local trivializations of the $G$-bundles are canonically given by the flat universal connection, $\tau _{i,\epsilon}$ and $\tau _{j,\epsilon}$ agree on $U_{ij}$, i.e. on $U_{ij}$
\[
\tau _{i,\epsilon}=\tau _{j,\epsilon}
\]
\[
\tau _{i}=\tau _{j}
\]
Therefore over $U_{ij}$, the transition map $\phi _j^{-1} \circ \phi _i$ fits in the diagram
\begin{diagram}
U_{ij} \times F \times D_\epsilon & \rTo^{(\iota _j^{-1} \circ \iota _i,id_F)} & U_{ij} \times F \times D_\epsilon \\
\uTo^{\cong}_{(\tau _i^{-1},id_{D_\epsilon})} & & \uTo^{\cong}_{(\tau _j^{-1},id_{D_\epsilon})} \\
P|_{U_{ij}} \times D_\epsilon & \rTo^{\phi _j^{-1} \circ \phi _i}& P|_{U_{ij}} \times D_\epsilon \\
\end{diagram}
In another word with the local trivializations $(\tau _i^{-1},id_{D_\epsilon})$ and $(\tau _j^{-1},id_{D_\epsilon})$, the transition map $\phi _j^{-1} \circ \phi _i$ corresponds to $(\iota _j^{-1} \circ \iota _i,id_F)$. Let $(\phi _j^{-1} \circ \phi _i)'$ and $\eta _{ij}'$ be $(\phi _j^{-1} \circ \phi _i)$ and $\eta _{ij}$ under the local trivializations, then 
\[
(\phi _j^{-1} \circ \phi _i)' = (\iota _j^{-1} \circ \iota _i,id_F)
\]
and therefore
\[
Id+\epsilon\eta _{ij}' = (Id+\epsilon\xi _{ij},Id_F)
\]
Comparing the coefficients of $\epsilon$ we get
\[
\eta _{ij}'=(\xi _{ij},0)
\]
According to the last paragraph in section~\ref{sec:rtc}, we see this means precisely that $\eta _{ij}=s(\xi _{ij})$.

With $\phi _i:P|_{U_i}\times D_\epsilon \to P_\epsilon|_{U_i}$ defined as above, $s_\epsilon |_{U_i}: TX_\epsilon |_{U_i} \to A_{P_\epsilon}|_{U_i}$ correspond to the section $s_i:TU_i \times D_\epsilon \to A_P|_{U_i} \times D_\epsilon$ constant along $D_\epsilon$, i.e. $\sigma _i=0$.

Therefore $\hat{\sigma} _i=0$, and the pair 
\[
(\eta_{ij})_{i,j\in I},(\hat{\sigma} _i)_{i\in I}
\]
is exactly the hyper 1-cocycle which is the image of $(\xi _{ij},0)$ under the map $H^1(s)$, which finishes the proof.
\end{proof}


\section{extended isomonodromy lifting}

The associated lifting $L_\lambda$ is obtained by extending the isomonodromy lifting $L$ to $\lambda\mathscr{C}onn\to\mathscr{M}_g$ by the $\mathbb{C}^\ast$-action, and multiplying by $\lambda$. For a fixed $\lambda$, $\lambda \neq 0$, the $\mathbb{C}^\ast$-action gives an isomorphism
\[
\mathscr{C}onn  \leftrightarrow  \lambda\mathscr{C}onn
\]
\[
\nabla  \leftrightarrow  \lambda\cdot\nabla
\]
The induced lifting on $\lambda\mathscr{C}onn\to\mathscr{M}_g$ by $L$ via the isomorphism, called the extended isomonodromy lifting, can be written very similarly as $L$. In the same way that the splitting map $s$ associated to a connection $\nabla$ induces a map $H^1(s)$ of hypercohomologies, the $\lambda$-splitting map $s_\lambda$ associated to a $\lambda$-connection $\nabla_\lambda$ induces a map of the corresponding hypercohomology spaces, which will be denoted as $H^1(s_\lambda)$.

\newtheorem{prop7}{Proposition}[section]
\begin{prop7}
The extended isomonodromy lifting of tangent vector on $\lambda\mathscr{C}onn\to\mathscr{M}_g$ is given by:
\[
\frac{1}{\lambda} H^1(s_\lambda): H^1(X,TX)\longrightarrow \mathbb{H}^1(X,A_P\xrightarrow{[\ ,\hat{s}_\lambda]}adP\otimes\Omega^1_X)
\]
\end{prop7}
\begin{proof}
Since the map of moduli spaces is $\nabla \mapsto \lambda\cdot\nabla$ (or $s\mapsto\lambda s$, $\hat{s}\mapsto\lambda\hat{s}$), the induced map on the tangent spaces $T_{(X,P,\nabla)}\mathscr{C}onn \to T_{(X,P,\lambda\nabla)}\lambda\mathscr{C}onn$ is
\[
\mathbb{H}^1(X,A_P\xrightarrow{[\ ,\hat{s}]}adP\otimes\Omega^1_X) \xrightarrow{H^1(id,\lambda )} \mathbb{H}^1(X,A_P\xrightarrow{[\ ,\lambda\hat{s}]}adP\otimes\Omega^1_X)
\]
where $(id,\lambda )$ is the map of complexes of sheaves
\begin{diagram}
(A_P & \rTo^{[\ ,\hat{s}]} & adP\otimes\Omega^1_X) \\
\dTo^{id} & & \dTo_{\lambda} \\
(A_P & \rTo^{[\ ,\lambda\hat{s}]} & adP\otimes\Omega^1_X) \\
\end{diagram}
and $H^1(id,\lambda )$ is the induced map on hypercohomology.

So to get the corresponding lifting on $\lambda\mathscr{C}onn$, i.e. to make the following diagram commutate, the vertical map on the right must be $\frac{1}{\lambda}H^1(\lambda s)$.
\begin{diagram}
\mathbb{H}^1(X,A_P\xrightarrow{[\ ,\hat{s}]}adP\otimes\Omega^1_X) & \rTo^{H^1(id,\lambda )} & \mathbb{H}^1(X,A_P\xrightarrow{[\ ,\lambda\hat{s}]}adP\otimes\Omega^1_X) \\
\uTo^{H^1(s)} & & \uTo^{\frac{1}{\lambda}H^1(\lambda s)} \\
H^1(X,TX) & \rTo{id} & H^1(X,TX) \\
\end{diagram}
\end{proof}

Since $L_\lambda$ is the extended isomonodromy lifting multiplied by $\lambda$, $L_\lambda =H^1(s_\lambda)$. $L_\lambda$ is a $\lambda$-lifting of tangent vectors.


\section{limit lifting at $\lambda =0$}

The continuous limit of $L_\lambda$ at $\lambda =0$ is a 0-lifting $L_0:T_X\mathscr{M}_g\to T_{(X,P,\nabla _0)}\mathscr{H}iggs$. Since $L_\lambda =H^1(s_\lambda)$, by continuity $L_0$ is equal to
\[
H^1(s_0): H^1(X,TX)\longrightarrow \mathbb{H}^1(X,A_P\xrightarrow{[\ ,\hat{s}_0]}adP\otimes\Omega^1_X)
\]
where $s_0$ is the 0-splitting of the Atiyah bundle of $P$ associated to the 0-connection(or Higgs field) $\nabla _0$ on $P$. Because $\pi_{0\ast}\circ H^1(s_0)=0$, so in fact $H^1(s_0)$ can be written as 
\[
H^1(s_0): H^1(X,TX)\longrightarrow \mathbb{H}^1(X,adP\xrightarrow{[\ ,\hat{s}_0]}adP\otimes\Omega^1_X)
\]
The images of a vector in $\vec{t}\in T_X\mathscr{M}_g$ under $H^1(s_0)$ form a vector field on the fiber $Higgs_X$ of $\pi_0$.

Recall that the quadratic Hitchin map on $Higgs_X$ is 
\[
qh: Higgs_X \to H^0(X,\Omega ^{\otimes 2})
\]
\[
(P,s_0) \mapsto <\hat{s}_0,\hat{s}_0>
\]
and its associated lifting of tangent vectors is
\[
L_{qh}: H^1(X,TX) \to \mathbb{H}^1(X,adP\xrightarrow{[\ ,\hat{s}_0]}adP\otimes\Omega^1_X)
\]
\[
f \mapsto H_{qh^\ast f}|_{(P,s_0)}
\] 

The main theorem (Theorem~\ref{thm1:main1}) is that $H^1(s_0)$ is equal to $\frac{1}{2}L_{qh}$. To prove it we need two lemmas. For the first lemma, Let $((\eta _{ij})_{i,j\in I},(\hat{\sigma} _i)_{i\in I})$ be a representative of an arbitrary element $v \in \mathbb{H}^1(X,adP\xrightarrow{[\ ,\hat{s}_0]}adP\otimes\Omega^1_X)$. Because on $U_{ij}$, $<\hat{s}_0,\hat{\sigma}_j-\hat{\sigma}_i>=<\hat{s}_0,[\eta _{ij},\hat{s}_0]>=-<[\hat{s}_0,\hat{s}_0],\eta _{ij}>=0$, therefore 
\[
<\hat{s}_0,\hat{\sigma}_i>=<\hat{s}_0,\hat{\sigma}_j>
\] 
Let $<\hat{s}_0,\hat{\sigma}> \in H^0(X,\Omega ^{\otimes 2})$ denote the resulting global quadratic differential form.

\newtheorem{lem1}{Lemma}[section]
\begin{lem1}
\label{lem1}
Using the above notations, the differential of the map $qh$ is equal to:
\[
qh_\ast : \mathbb{H}^1(X,adP\xrightarrow{[\ ,\hat{s}_0]}adP\otimes\Omega^1_X) \to H^0(X,\Omega ^{\otimes 2})
\]
\[
v \mapsto 2<\hat{s}_0,\hat{\sigma}>
\]
\end{lem1}
\begin{proof}
Let $\{ U_i \}_{i\in I}$ be the \v Cech covering of the curve $X$, $(P_\epsilon,s_\epsilon)$ the family of Higgs bundles over $D_\epsilon$ that correspond to $v$, i.e. for some $\phi _i: P|_{U_i}\times D_\epsilon \to P_\epsilon|_{U_i}$, some $s_i: TU_i \to adP|_{U_i}$ and the given $\sigma _i: TU_i \to adP|_{U_i}$, the diagram
\begin{diagram}
adP|_{U_i} \times D_\epsilon & \rTo^{d\phi _i} & ad{P_\epsilon}|_{U_i} \\
\uDashto^{s_i+\epsilon\sigma _i} \dTo_{p_\ast} &  & \dTo^{p_\ast} \uDashto_{s_\epsilon |_{U_i}} \\
TU_i \times D_\epsilon & \rTo^{id} & TU_i \times D_\epsilon \\
\end{diagram}
is commutative. Because $qh: (P_\epsilon,s_\epsilon) \mapsto <\hat{s}_\epsilon,\hat{s}_\epsilon>$, and that over $U_i$, $<\hat{s}_\epsilon,\hat{s}_\epsilon>=<\hat{s}_i+\epsilon\hat{\sigma}_i,\hat{s}_i+\epsilon\hat{\sigma}_i>=<\hat{s}_i,\hat{s}_i>+2<\hat{s}_i,\hat{\sigma}_i>\epsilon$, so $qh: (P_\epsilon,s_\epsilon) \mapsto <\hat{s}_0,\hat{s}_0>+2<\hat{s}_0,\hat{\sigma}>\epsilon$. Taking the coefficient of $\epsilon$, we see that $qh_\ast$ maps $v$ to $2<\hat{s}_0,\hat{\sigma}>$.
\end{proof}

For the second lemma, let $\omega _H$ be the symplectic 2-form on $Higgs_X$, $((\eta _{ij})_{i,j\in I},(\hat{\sigma} _i)_{i\in I})$ and $((\eta ' _{ij})_{i,j\in I},(\hat{\sigma}' _i)_{i\in I})$ representatives of two vectors $v,v' \in \mathbb{H}^1(X,adP\xrightarrow{[\ ,\hat{s}_0]}adP\otimes\Omega^1_X)$.
\newtheorem{lem2}[lem1]{Lemma}
\begin{lem2}
\label{lem2}
Let $\int : H^1(X,\Omega ^1_X) \to \mathbb{C}$ be the canonical map, then
\[
\omega_H(v,v')=\int (\eta _{ij}\sqcup\hat{\sigma}' _i+\eta ' _{ij}\sqcup\hat{\sigma} _i)
\]
where $\sqcup$ means the cup product $\cup$ of \v Cech cochains composed with the Killing form $<\ ,\ >$.
\end{lem2}
\begin{proof}
see \cite{Ma} Proposition 7.12.
\end{proof}

\newtheorem*{thm}{Theorem}
\begin{thm}
$H^1(s_0)$ is equal to $\frac{1}{2}L_{qh}$.
\end{thm}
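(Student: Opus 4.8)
The plan is to reduce the identity $H^1(s_0) = \frac12 L_{qh}$ to a computation with explicit \v Cech representatives, using Lemma~\ref{lem1} and Lemma~\ref{lem2} together with the fact that both sides are liftings lying in the same hypercohomology group $\mathbb{H}^1(X,adP\xrightarrow{[\ ,\hat{s}_0]}adP\otimes\Omega^1_X)$. First I would fix a vector $f\in H^1(X,TX)\cong T_X\mathscr{M}_g$, represented as a \v Cech $1$-cocycle $(\xi_{ij})_{i,j\in I}$ with values in $TX$ for the covering $\{U_i\}$. By Proposition~\ref{prop6} (in the $\lambda=0$ limit), the image $H^1(s_0)(f)$ is represented by the hyper $1$-cocycle $((s_0(\xi_{ij}))_{i,j\in I},(0)_{i\in I})$, where I am using that $s_0$ induces a map of complexes and that the isomonodromy lift has vanishing $\hat\sigma$-component. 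This is the left-hand side.

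Next I would compute the right-hand side $L_{qh}(f) = H_{qh^\ast f}|_{(P,s_0)}$. Here $f$ is viewed as a linear functional on $H^0(X,\Omega^{\otimes 2})$ via Serre duality, i.e. $\langle f, \beta\rangle = \int (\xi_{ij}\sqcup \beta)$ (cup product of the $TX$-valued cocycle $(\xi_{ij})$ with the quadratic differential $\beta$, landing in $H^1(X,\Omega^1_X)$, then integrated). So $qh^\ast f$ is the function $(P,\theta)\mapsto \int(\xi_{ij}\sqcup\langle\hat\theta,\hat\theta\rangle)$ on $Higgs_X$. To find its Hamiltonian vector field I would use Lemma~\ref{lem2}: the Hamiltonian vector field $H_{qh^\ast f}$ at $(P,s_0)$ is the unique $v\in \mathbb{H}^1$ with $\omega_H(v,v') = d(qh^\ast f)(v')$ for all test vectors $v'$ represented by $((\eta'_{ij}),(\hat\sigma'_i))$. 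By the chain rule and Lemma~\ref{lem1}, $d(qh^\ast f)(v') = \langle f, qh_\ast v'\rangle = 2\int(\xi_{ij}\sqcup\langle\hat s_0,\hat\sigma'\rangle)$. On the other hand, plugging the candidate $v = ((s_0(\xi_{ij})),(0))$ into Lemma~\ref{lem2} gives $\omega_H(v,v') = \int(s_0(\xi_{ij})\sqcup\hat\sigma'_i + \eta'_{ij}\sqcup 0) = \int(s_0(\xi_{ij})\sqcup\hat\sigma'_i)$. The key algebraic point will be to verify that $\int(s_0(\xi_{ij})\sqcup\hat\sigma'_i)$ equals $\int(\xi_{ij}\sqcup\langle\hat s_0,\hat\sigma'\rangle)$ — i.e. that cupping with $s_0(\xi_{ij})$ and pairing via the Killing form against the $adP$-valued $\hat\sigma'$ reproduces first contracting $\hat s_0$ with $\hat\sigma'$ via $\langle\ ,\ \rangle$ to get a quadratic differential and then cupping with $\xi_{ij}$. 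This is essentially the statement that $s_0$, as a section of $A_P\otimes\Omega^1_X$ projecting to $\mathrm{id}\otimes$nothing, pairs with the $TX$-component correctly; since $\hat s_0$ lives in $H^0(X,adP\otimes\Omega^1_X)$ (its $A_P\to TX$ projection is $0$ because $\nabla_0$ is a Higgs field), the pairing $\langle s_0(\xi_{ij}),\hat\sigma'_i\rangle$ is literally $\langle \hat s_0,\hat\sigma'_i\rangle$ contracted against $\xi_{ij}$ up to the $\Omega^1\otimes TX\to\mathscr O$ contraction. Granting this, $\omega_H(v,v') = \int(\xi_{ij}\sqcup\langle\hat s_0,\hat\sigma'\rangle) = \tfrac12 d(qh^\ast f)(v')$, so $v = \tfrac12 H_{qh^\ast f}$, which is exactly $H^1(s_0)(f) = \tfrac12 L_{qh}(f)$.

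I expect the main obstacle to be the bookkeeping in that last compatibility check: making precise the identification of the cup-product-then-Killing-form pairing on $A_P$-valued versus $adP$-valued cochains, and tracking how the $\Omega^1_X$ factor in $\hat s_0$ interacts with the $TX$-valued cocycle $\xi_{ij}$ under the Serre-duality contraction. One must be careful that $s_0(\xi_{ij})$ is an $A_P$-valued object while $\hat s_0$ carries an extra $\Omega^1_X$, and that the nondegeneracy of $\omega_H$ (needed to conclude $v$ is genuinely the Hamiltonian vector field rather than merely satisfying the pairing identity against the particular representatives chosen) is applied correctly — this requires knowing that the representatives $((\eta'_{ij}),(\hat\sigma'_i))$ range over all of $\mathbb{H}^1$. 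A secondary, more routine point is checking that the cocycle $((s_0(\xi_{ij})),(0))$ is indeed closed in the complex $adP\xrightarrow{[\ ,\hat s_0]}adP\otimes\Omega^1_X$, which follows because $[\ ,\hat s_0]\circ s_0$ amounts to bracketing $\hat s_0$ with itself (hence zero), exactly as in the commutative diagram preceding Proposition~\ref{prop6}. Once these are in place the constant $\tfrac12$ drops out cleanly from the factor of $2$ in Lemma~\ref{lem1}.
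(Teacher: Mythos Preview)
Your proposal is correct and follows essentially the same route as the paper: both compute $\omega_H$ against an arbitrary test vector $v'$, use Lemma~\ref{lem1} to get $d(qh^\ast f)(v')=2f(\langle\hat s_0,\hat\sigma'\rangle)$, use Lemma~\ref{lem2} on the explicit representative $((s_0(\xi_{ij})),(0))$ of $H^1(s_0)(f)$, and invoke nondegeneracy of $\omega_H$. The ``key algebraic point'' you flag---that $\langle s_0(\xi_{ij}),\hat\sigma'_i\rangle$ equals $\xi_{ij}$ contracted into $\langle\hat s_0,\hat\sigma'_i\rangle$---is exactly the step the paper writes as $\omega_H((s_0(f),0),(\eta'_{ij},\hat\sigma'_i))=f(\langle\hat s_0,\hat\sigma'\rangle)$ without further comment; your concern about $A_P$ versus $adP$ dissolves since $p_\ast\circ s_0=0$ forces $s_0$ to land in $adP$.
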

\begin{proof}
$\forall f \in H^1(X,TX)$, we want to show that $L_{qh}(f)=2H^1(s_0)(f)$. Let $((\eta ' _{ij})_{i,j\in I},(\hat{\sigma}' _i)_{i\in I})$ be a representative of an element $v \in \mathbb{H}^1(X,adP\xrightarrow{[\ ,\hat{s}_0]}adP\otimes\Omega^1_X)$. 
Using Lemma~\ref{lem1},
\[
\omega _H(L_{qh}(f),v)=d(qh^\ast f)(v)=df(qh_\ast v)=df(2<\hat{s}_0,\sigma >)=f(2<\hat{s}_0,\sigma>)
\]
Using Lemma~\ref{lem2},
\[
\omega _H(H^1(s_0)(f),v)=\omega _H((s_0(f),0),(\eta _{ij},\sigma _i))=f(<\hat{s}_0,\sigma>)
\]
So $L_{qh}(f)=2H^1(s_0)(f)$, $\forall f \in H^1(X,TX)$. Therefore $H^1(s_0)=\frac{1}{2}L_{qh}$.
\end{proof}

\end{document}